\theoremstyle{plain}
  \newtheorem{theorem}{Theorem}[section]
  \newtheorem{lemma}[theorem]{Lemma}
  \newtheorem{conjecture}[theorem]{Conjecture}
  \newtheorem{proposition}[theorem]{Proposition}
\theoremstyle{definition}
  \newtheorem{example}[theorem]{Example}
  \newtheorem{remark}[theorem]{Remark}
  \newtheorem{question}[theorem]{Question}
\newtheorem*{acknowledgements}{Acknowledgements}
\renewcommand{\geq}{\geqslant}
\renewcommand{\leq}{\leqslant}
\newcommand{\assign}{:=}
\newcommand{\pFq}[5]{\ensuremath{{}_{#1}F_{#2} \biggl( \genfrac{}{}{0pt}{}{#3}{#4} \biggm| {#5} \biggr)}}
\newcommand{\mathbbm}[1]{\mathbb{#1}}
\newcommand{\tmem}[1]{\emph{#1}}
\newcommand{\tmtextit}[1]{{\itshape{#1}}}
\newcommand{\tmtextup}[1]{{\upshape{#1}}}
\newcommand{\bx}{\boldsymbol{x}}
\newcommand{\bn}{\boldsymbol{n}}
\begin{document}

\title{Positivity of rational functions and their diagonals}
\author{Armin Straub\and Wadim Zudilin}

\maketitle

\hbox to\hsize{\hfill\sl To Dick Askey on the occassion of his birthday,\kern10mm}
\hbox to\hsize{\hfill\sl with many positive wishes\kern10mm}

\bigskip
\begin{abstract}
  The problem to decide whether a given rational function in several variables
  is positive, in the sense that all its Taylor coefficients are positive,
  goes back to Szeg\H{o} as well as Askey and Gasper, who inspired more recent
  work. It is well known that the diagonal coefficients of rational functions
  are $D$-finite. This note is motivated by the observation that, for several
  of the rational functions whose positivity has received special attention,
  the diagonal terms in fact have arithmetic significance and arise from
  differential equations that have modular parametrization. In each of these
  cases, this allows us to conclude that the diagonal is positive.

  Further inspired by a result of Gillis, Reznick and Zeilberger, we
  investigate the relation between positivity of a rational function and the
  positivity of its diagonal.
\end{abstract}

\noindent
{\small
{\em Keywords}: positivity; rational function; hypergeometric function; modular function;
Ap\'ery-like sequence; multivariate asymptotics.
}

\section{Introduction}

The question to decide whether a given rational function is
\tmtextit{positive}, that is, whether its Taylor coefficients are all
positive, goes back to Szeg\H{o} {\cite{szego-pos33}} and has since been
investigated by many authors including Askey and Gasper
{\cite{askey-pos74,askey-pos72,askey-pos77}}, Koornwinder
{\cite{koornwinder-pos78}}, Ismail and Tamhankar {\cite{ismail-pos79}},
Gillis, Reznick and Zeilberger {\cite{zb-pos-el83}}, Kauers
{\cite{kauers-pos07}}, Straub {\cite{straub-pos08}}, Kauers and Zeilberger
{\cite{kz-pos08}}, Scott and Sokal {\cite{ss-pos13}}. The interested reader
will find a nice historical account in {\cite{ss-pos13}}. A particularly
interesting instance is the Askey--Gasper rational function
\begin{equation}
  A (x, y, z) \assign \frac{1}{1 - x - y - z + 4 xyz}, \label{eq:AG3}
\end{equation}
whose positivity is proved in {\cite{askey-pos77}} and {\cite{zb-pos-el83}}.
Generalizations to more than three variables are rarely tractable, with the
longstanding conjecture of the positivity of
\begin{equation}
  \label{eq:E24} \frac{1}{1 - x - y - z - w + \frac{2}{3}  (xy + xz + xw + yz
  + yw + zw)},
\end{equation}
also referred to as the Lewy--Askey problem. Very recently, Scott and Sokal
\cite{ss-pos13} succeeded in proving the non-negativity of \eqref{eq:E24},
both in an elementary way by an explicit Laplace-transform formula and based on more general results on the basis
generating polynomials of certain classes of matroids.
Note that by a result from {\cite{kz-pos08}} the positivity of \eqref{eq:E24}
would follow from the positivity of
\begin{align}
  D (x, y, z, w) \assign \frac{1}{1 - x - y - z - w + 2 (yzw + xzw + xyw +
  xyz) + 4 xyzw} ,
\end{align}
which is still an open problem.
In another direction, Gillis, Reznick and Zeilberger conjecture in {\cite{zb-pos-el83}} that
\begin{equation}
  \frac{1}{1 - (x_1 + x_2 + \ldots + x_d) + d! x_1 x_2 \cdots x_d}
  \label{eq:rat-grz}
\end{equation}
has non-negative coefficients for any $d \geq 4$ (this is false for $d=2,3$). It is further asserted
(though the proof is ``omitted due to its length'') that, in order to show
the non-negativity of the rational functions in \eqref{eq:rat-grz}, it suffices to
prove that their {\tmem{diagonal}} Taylor coefficients are non-negative.
Modulo this claim, the cases $d = 4, 5, 6$ were established by Kauers
{\cite{kauers-pos07}}, who found and examined recurrences for the respective
diagonal coefficients.

The above claim from {\cite{zb-pos-el83}} suggests the following question.
Here, we denote by $e_k (x_1, \ldots, x_d)$ the elementary symmetric
polynomials defined by
\begin{equation}\label{eq:e}
  \prod_{j = 1}^d (x + x_j) = \sum_{k = 0}^d e_k (x_1, \ldots, x_d) x^{d - k} .
\end{equation}
\begin{question}
  \label{q:diag}Under what \tmtextup{(}natural\tmtextup{)}
  condition\tmtextup{(}s\tmtextup{)} is the positivity of a rational function
  $h (x_1, \ldots, x_d)$ of the form
  \begin{equation}
    h (x_1, \ldots, x_d) = \frac{1}{\sum_{k = 0}^d c_k e_k (x_1, \ldots, x_d)}
    \label{eq:rat-linear}
  \end{equation}
  implied by the positivity of its diagonal\tmtextup{?}
  For example, would the positivity of $h (x_1, \ldots, x_{d - 1}, 0)$ be a
  sufficient condition\tmtextup{?}
\end{question}

Another motivation for this question is the fact that for several important
rational functions, like the ones reproduced above, the diagonal coefficients
are arithmetically interesting sequences. In particular, expressing
them in terms of known hypergeometric summations sometimes makes their
positivity apparent. For instance, the diagonal sequence for $A (x, y, z)$
is
\begin{equation}
  a_{n, n, n} = \sum_{k = 0}^n \binom{n}{k}^3, \label{eq:AG3d}
\end{equation}
see Example~\ref{ex:AG-rat}, while the diagonal of $D (x, y, z, w)$ is given by
\begin{align}
d_{n, n, n, n} = \sum_{k = 0}^n \binom{n}{k}^2 \binom{2 k}{n}^2 ,
\end{align}
as is shown in Example~\ref{ex:KZ-rat4d}.
Since these diagonal sequences are manifestly positive, sufficient progress on
Question~\ref{q:diag} might provide a proof of the conjectured positivity of $D
(x, y, z, w)$.
More generally, proving positivity of a single sequence is much simpler from a
practical point of view than proving positivity of all Taylor coefficients of a
rational function, and tools such as cylindrical algebraic decomposition can be
used for this task rather successfully in specific examples, as illustrated by
\cite{kauers-pos07} and observed in some of the examples herein.

We note that, with no loss of generality, we may assume in
Question~\ref{q:diag} that $c_0 = 1$.

In Section \ref{sec:2d}, we answer Question~\ref{q:diag} in the affirmative
when $d = 2$. The three and four-dimensional cases are discussed in Sections
\ref{sec:3d} and \ref{sec:4d}, while Section~\ref{sec:5d} covers an approach
to positivity via asymptotics. In particular, we prove that the conjectural
conditions, given in \cite{straub-pos08}, for positivity of rational functions
in three variables are indeed necessary.

\section{The two-dimensional case}\label{sec:2d}

In the case of only two variables, we are thus interested in the rational
function
\[ h (x, y) = \frac{1}{1 + c_1 (x + y) + c_2 x y} . \]
Note that the condition $c_1<0$ is necessary to ensure the positivity of $h(x,y)$,
as $-c_1$ is a Taylor coefficient of its series expansion. The next examples
demonstrate that the positivity of the diagonal coefficients of $h(x,y)$ and
the positivity of $h(x,0)$ are not implied by each other.

\begin{example}
  The rational function $h(x,y) = 1 / (1 + x + y)$ has positive diagonal coefficients
  but is not positive; indeed, $h(x,0)=1-x+O(x^2)$. This illustrates that some
  condition is indeed needed in Question~\ref{q:diag}.
\end{example}

\begin{example}
  Let $h(x,y) = 1 / (1 - x - y + 2xy)$. Then $h(x,0)=1/(1-x)$ is positive,
  but the diagonal coefficients of $h(x,y)$ are not positive.
\end{example}

\begin{theorem}
\label{th:2d}
A two-variable rational function
\[ h (x, y) = \frac{1}{1 + c_1 (x + y) + c_2 x y} \]
is positive, if both $h(x,0)$ and the diagonal of $h(x,y)$ are positive.
\end{theorem}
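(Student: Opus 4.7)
My plan is a two-step reduction: identify the diagonal generating function in closed form to pin down the parameter range that diagonal positivity forces, and then verify positivity of $h$ on that range by a direct series expansion.

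First I would normalize. The condition that $h(x,0)=1/(1+c_1 x)$ be coefficient-wise positive forces $c_1\leq 0$. The subcase $c_1=0$ reduces to $h=1/(1+c_2 xy)$, for which both diagonal positivity and full positivity amount to $c_2\leq 0$, so nothing is to prove. When $c_1<0$, the rescaling $x\mapsto x/|c_1|$, $y\mapsto y/|c_1|$ multiplies every Taylor coefficient by a positive factor and hence preserves both hypotheses; it reduces $h$ to $1/(1-x-y+cxy)$ with $c:=c_2/c_1^2$. The theorem then becomes: if the diagonal of $1/(1-x-y+cxy)$ is positive, then the rational function itself is positive.

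Next I would compute the diagonal in closed form. Using the residue representation
\[
\sum_{n\geq 0} a_{n,n}\,t^n \;=\; \frac{1}{2\pi i}\oint h(x,t/x)\,\frac{dx}{x},
\]
evaluated at the small root of $-x^2+(1+ct)x-t$, one obtains
\[
D(t) \;=\; \frac{1}{\sqrt{1-2(2-c)t+c^2 t^2}}.
\]
For $c>0$ the substitution $s=ct$ in the Legendre generating function $\sum_{n} P_n(u)s^n = 1/\sqrt{1-2us+s^2}$ identifies $a_{n,n}=c^n P_n\bigl((2-c)/c\bigr)$. Since all zeros of $P_n$ lie in $(-1,1)$, one has $P_n(u)>0$ for every $n$ if and only if $u\geq 1$; here this forces $(2-c)/c\geq 1$, equivalently $c\leq 1$. (For $c\leq 0$ both $h$ and its diagonal are automatically positive, so the statement is trivial in that range.)

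Finally, I would deduce positivity of $h$ for $c\leq 1$ from the factorization
\[
1-x-y+cxy \;=\; (1-x)(1-y)-(1-c)xy,
\]
which yields the geometric expansion
\[
h(x,y) \;=\; \sum_{k\geq 0}(1-c)^k \frac{(xy)^k}{(1-x)^{k+1}(1-y)^{k+1}}.
\]
Because $1-c\geq 0$, every summand has non-negative Taylor coefficients, and hence so does $h$. The only non-routine step in the plan is the sign characterization $P_n(u)>0$ for all $n$ if and only if $u\geq 1$; with that in hand the remainder is algebra.
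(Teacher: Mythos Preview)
Your proof is correct and follows the same skeleton as the paper's: normalize to $1/(1-x-y+cxy)$, identify the diagonal generating function as $1/\sqrt{1-2(2-c)t+c^2t^2}$, show that diagonal positivity forces $c\leq 1$, and then verify full positivity for $c\leq 1$ via the factorization $(1-x)(1-y)-(1-c)xy$ and a geometric expansion. The one genuine difference is the middle step. The paper observes that for $c>1$ the quadratic $1-2(2-c)t+c^2t^2$ has non-real roots and invokes singularity analysis to conclude the coefficients oscillate in sign. You instead recognize the diagonal coefficients as $c^nP_n\bigl((2-c)/c\bigr)$ and appeal to properties of Legendre polynomials. These are two packagings of the same phenomenon: the roots of $1-2us+s^2$ are $e^{\pm i\theta}$ precisely when $u=\cos\theta\in(-1,1)$.

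One caveat: the fact that all zeros of $P_n$ lie in $(-1,1)$ gives you only the easy direction (if $u\geq 1$ then $P_n(u)>0$ for all $n$). For the direction you actually need---that $u\in(-1,1)$ forces some $P_n(u)\leq 0$---the location of zeros alone is not enough; you need, for instance, the Laplace--Heine asymptotic $P_n(\cos\theta)\sim\sqrt{2/(\pi n\sin\theta)}\cos\bigl((n+\tfrac12)\theta-\tfrac{\pi}{4}\bigr)$, or the fact that the largest zero of $P_n$ increases to $1$ together with interlacing. You flagged this as the non-routine step, so you are aware it needs more, but the sentence beginning ``Since all zeros of $P_n$ lie in $(-1,1)$'' suggests the wrong reason.
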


\begin{proof}
The positivity of $h(x,0)$ implies that $c_1<0$. Upon rescaling the variables
by a positive factor, we may assume that $c_1=-1$ and write our rational
function in the form
\begin{equation}
  \frac{1}{1 - (x + y) + a x y} = \sum_{n,m=0}^\infty u_{n,m}x^ny^m. \label{eq:rat-2d}
\end{equation}
As demonstrated in the course of {\cite[Proposition 4]{straub-pos08}}, this
rational function is positive if and only if $a \leq 1$.
Here, we only need the easy observation that it is positive if $a\leq1$,
which follows directly from the geometric series and the factorization
$1-(x+y)+xy = (1-x)(1-y)$.

On the other hand, the diagonal terms $u_n \assign u_{n, n}$ of the Taylor
expansion \eqref{eq:rat-2d} are given by
\[ u_n = \sum_{k = 0}^n \frac{(2 n - k) !}{k! (n - k) !^2}
   (- a)^k . \]
We observe that the sequence $u_n$ is characterized by the generating series
\[ \sum_{n = 0}^{\infty} u_n z^n = \frac{1}{\sqrt{1 - 2 (2 - a) z + a^2 z^2}}
   . \]
For $a > 1$, the quadratic polynomial $1 - 2 (2 - a) z + a^2 z^2$ has non-real
roots, from which we conclude that $u_n$ is (eventually) sign-indefinite.
Therefore, the series \eqref{eq:rat-2d} is positive if and only if its
diagonal terms are positive.
\end{proof}

Theorem~\ref{th:2d} answers Question~\ref{q:diag} in the affirmative when $d = 2$.

\begin{remark}
  \label{rk:char-2d}The sequence $u_n$ satisfies the three-term recurrence
  \[ (n + 1) u_{n + 1} = (2 - a) (2 n + 1) u_n - a^2 n u_{n - 1}, \]
  which has characteristic polynomial $x^2 -2(2-a)x +a^2 = (x + a)^2 - 4 x$. Note that, for $a >
  1$, this polynomial has complex roots.
\end{remark}

The ultimate reduction to $d=1$ and $d=2$ performed in this section shows that
for $d\ge3$ we can normalize, without loss of generality, our $d$-variable rational function
\eqref{eq:rat-linear} to satisfy $c_0=1$, $c_1=-1$ and also $c_2=a\le1$.
This will be the canonical form of a rational function in Question~\ref{q:diag}.

\section{The three-dimensional case}\label{sec:3d}

A partially conjectural classification of positive rational functions of the
form
\begin{equation}
  \label{eq:hab} h_{a, b} (x, y, z) = \frac{1}{1 - (x + y + z) + a (xy + yz +
  zx) + bxyz}
\end{equation}
has been given in {\cite{straub-pos08}}. It is conjectured there
{\cite[Conjecture 1]{straub-pos08}} that $h_{a, b}$ is positive if and only if
the three inequalities $a \leq 1$, $b < 6 (1 - a)$,
$b \leq 2 - 3 a + 2 (1 - a)^{3 / 2}$ hold. In Theorem \ref{th:part1} below we show
that all three conditions are indeed necessary for positivity.

\begin{example}\label{ex:str-pos3d}
  It is proven in {\cite{straub-pos08}} that the rational
  function $h_{a, b}$ with
  \[ a = \frac{\lambda (\lambda + 2)}{(\lambda + 1)^2}, \hspace{1em} b = -
     \frac{(\lambda - 1)  (\lambda + 2)^2}{(\lambda + 1)^3} \]
  is positive for all $\lambda \geq 0$. The conjecture mentioned above
  predicts that it is, in fact, positive as long as $\lambda > (1 +
  \sqrt{2})^{1 / 3} - (1 + \sqrt{2})^{- 1 / 3} - 1 \approx - 0.403928$. Note
  that this rational function, after a scaling of variables, is
  \begin{align}
    \frac{1}{1 - (\lambda + 1) (x + y + z) + \lambda (\lambda + 2) (xy + yz +
    zx) - (\lambda - 1) (\lambda + 2)^2 xyz} .
  \end{align}
  Empirically, it appears that the Taylor coefficients of this rational function are
  polynomials in $\lambda$ with positive coefficients. We have verified this for the
  coefficients of $x^k y^m z^n$ with $k,m,n \leq 100$.
\end{example}

Note that, by the results of Section \ref{sec:2d}, $h_{a, b} (x, y, 0)$ is
positive if and only if $a \leq 1$. The next conjecture is therefore
equivalent to an affirmative answer to Question~\ref{q:diag} for $d = 3$.

\begin{conjecture}
  \label{conj:3d}Suppose that $a \leq 1$. Then the rational function~\eqref{eq:hab}
  is positive if and only if its diagonal is positive.
\end{conjecture}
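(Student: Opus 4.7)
The forward direction is immediate, since the diagonal coefficients are a subset of all Taylor coefficients. The substance lies in the converse, and my plan is to combine asymptotic analysis of the diagonal with the classification conjectured in \cite[Conjecture~1]{straub-pos08}, which predicts that $h_{a,b}$ is positive exactly when $a\le 1$, $b<6(1-a)$, and $b\le 2-3a+2(1-a)^{3/2}$.

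The first step is to show that if $a\le1$ and the diagonal of $h_{a,b}$ is positive then the pair $(a,b)$ lies inside Straub's region. For this I would write the diagonal as an iterated Cauchy integral and exploit the full $S_3$-symmetry of the denominator to reduce to a univariate saddle-point problem, with the saddles governed by the roots of a cubic in the diagonal variable $t$ (obtained by setting $x=y=z=t$). I expect the boundary curve $b=2-3a+2(1-a)^{3/2}$ to arise precisely as the discriminant locus where two dominant real saddles collide; beyond the curve they split into a complex-conjugate pair, producing oscillatory asymptotics and hence infinitely many sign changes in the diagonal sequence. The threshold $b=6(1-a)$ should likewise emerge as a degeneration of the pole variety on the positive torus. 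This mirrors the discriminant-based argument concluding the proof of Theorem~\ref{th:2d} and the asymptotic strategy developed in Section~\ref{sec:5d}; the ACSV machinery of Pemantle--Wilson is the natural technical tool.

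The second step is the converse: that whenever $(a,b)$ lies in Straub's region, the full function $h_{a,b}$ is positive. Here my plan is to exploit the modular parametrizations highlighted in the abstract and in Example~\ref{ex:str-pos3d}, where the one-parameter family on the boundary admits closed-form expressions for its coefficients (and seemingly positive ones in $\lambda$). A natural strategy is then to use these boundary cases as a backbone and extend positivity into the interior of the region by a continuity/deformation argument in $(a,b)$, together with the elementary fact that the set of parameters for which the function is positive is topologically closed in the region where the denominator avoids the positive orthant.

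The principal obstacle is the second step, which amounts to proving \cite[Conjecture~1]{straub-pos08} and remains open. The deformation argument is the real difficulty: continuity alone does not propagate positivity of individual coefficients to positivity of all of them simultaneously, so one would need an effective positive representation (e.g.\ a Laplace-transform or matroidal formula in the spirit of the Scott--Sokal treatment of \eqref{eq:E24}) that varies nicely with the parameters. Absent such a representation, the conjecture effectively reduces Question~\ref{q:diag} in three variables to Straub's conjecture, which is the honest state of affairs.
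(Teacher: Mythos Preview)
The statement is labeled \emph{Conjecture} in the paper and is not proved there; there is no ``paper's own proof'' to compare against. Your proposal is candid about this, and your final paragraph accurately summarizes the situation: the hard converse direction would follow from \cite[Conjecture~1]{straub-pos08}, which remains open.

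That said, your Step~1 is also not a proof but a heuristic sketch, and it is not established anywhere in the paper either. Theorem~\ref{th:part1} shows that the inequalities $a\le1$ and $b\le 2-3a+2(1-a)^{3/2}$ are necessary for \emph{non-negativity of the full function}, not for positivity of the diagonal alone; its argument goes through Proposition~\ref{prop:cp-nn}, which takes non-negativity of all Taylor coefficients as a hypothesis and so cannot be invoked when only the diagonal is assumed positive. To make Step~1 rigorous you would actually have to carry out the saddle-point/ACSV analysis of the diagonal generating function and prove that, for $a\le1$ and $b>2-3a+2(1-a)^{3/2}$, the dominant singular contribution is genuinely oscillatory and not cancelled by subdominant terms. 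The paper's discussion around \eqref{eq:3d-char4} supports your expectation (the discriminant of the characteristic polynomial of the diagonal recurrence vanishes exactly on that curve), but turning this into a proof of eventual sign-indefiniteness still requires work. You should also check separately what the diagonal does along the line $b=6(1-a)$; the paper does not address whether this condition is detected by the diagonal.

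In short: your reduction is the natural one, but as written neither step is complete, and the paper does not supply a proof of either.
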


In the case $a > 1$, which is not covered by Conjecture \ref{conj:3d},
the following is a conjectural characterization of the rational
functions which have positive diagonal coefficients.

\begin{conjecture}
  Let $a \geq 1$. The diagonal of~\eqref{eq:hab} is positive if and only if
  $b \leq - a^3$.
\end{conjecture}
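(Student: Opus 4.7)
The conjecture splits into sufficiency ($b \leq -a^3 \Rightarrow d_n \geq 0$) and necessity ($b > -a^3 \Rightarrow d_n < 0$ for some $n$), and the boundary value $b = -a^3$ is distinguished by the identity
\[
Q(t, t, t) = 1 - 3t + 3at^2 - a^3 t^3 = (1 - at) \bigl( 1 + (a - 3)t + a^2 t^2 \bigr),
\]
whose quadratic cofactor has discriminant $-3(a-1)(a+3) \leq 0$, so $t = 1/a$ is the unique positive real root of $Q(t,t,t)$ for $a \geq 1$.

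\textbf{Sufficiency.} Write $\epsilon \assign -(b + a^3) \geq 0$ and let $Q_0$ denote the denominator of $h_{a, -a^3}$, so the denominator of $h_{a, b}$ equals $Q_0 - \epsilon \cdot xyz$. The geometric expansion
\[
\frac{1}{Q_0 - \epsilon \, xyz} = \sum_{k \geq 0} \epsilon^k \, \frac{(xyz)^k}{Q_0^{k+1}}
\]
yields $d_n = \sum_{k \geq 0} \epsilon^k D_{n-k}^{(k+1)}$, where $D_m^{(r)} \assign [x^m y^m z^m](1/Q_0^r)$. Since $\epsilon \geq 0$, it suffices to establish $D_m^{(r)} \geq 0$ for every $m \geq 0$ and $r \geq 1$. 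Here we exploit the key factorization
\[
Q_0 = A(x, y) - z \cdot A(ax, ay), \qquad A(x, y) \assign 1 - x - y + axy ,
\]
valid precisely when $b = -a^3$. Expanding in $z$ gives
\[
D_m^{(r)} = \binom{m + r - 1}{r - 1} \, [x^m y^m] \, \frac{A(ax, ay)^m}{A(x, y)^{m + r}} ,
\]
which reduces the problem to positivity of a two-variable diagonal coefficient. The self-similar structure of the pair $(A(x, y), A(ax, ay))$ suggests attacking this via a partial-fraction representation in one variable, along the lines of Theorem~\ref{th:2d}, or via an integral representation in which the integrand has fixed sign on an appropriate contour.

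\textbf{Necessity.} For $b = -a^3 + \delta$ with $\delta > 0$, implicit differentiation at the critical point $(1/a, 1/a, 1/a)$ shows that the smallest positive real root $t_0 = t_0(\delta)$ of $Q(t,t,t) = 0$ moves smoothly, with $dt_0/d\delta|_{\delta = 0} = 1/(3 a^3 (a - 1)) > 0$ for $a > 1$. Applying the Pemantle--Wilson multivariate asymptotic machinery at the smooth symmetric critical point produces $d_n = C(a, b) \, t_0^{-3n} \, n^{-1} \, (1 + o(1))$; the plan is to express the leading constant $C(a, b)$ explicitly in terms of the Hessian of $\log Q$ at the critical point and show that it is negative for all $\delta > 0$, so that $d_n < 0$ for all sufficiently large $n$. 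The marginal case $a = 1$ requires separate treatment via the explicit Franel-type formula $d_n = \sum_k \binom{n}{k}^3 (-(b + 1))^k$, whose sign for $b > -1$ can be analysed directly.

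\textbf{Main obstacle.} The crux of the sufficiency direction is establishing the two-variable positivity $[x^m y^m] A(ax, ay)^m / A(x, y)^{m + r} \geq 0$. For $a = 1$ this collapses to the trivial $\binom{m + r - 1}{r - 1}^3$, but for $a > 1$ both $A(x, y)$ and $A(ax, ay)$ have mixed-sign Taylor coefficients, so any proof must track delicate cancellations; this seems to demand a genuinely new identity exploiting the scaling symmetry $A(x, y) \mapsto A(ax, ay)$. On the necessity side, the difficulty lies in computing $C(a, b)$ precisely enough to detect a sign change at $b = -a^3$ and in reconciling this asymptotic picture with the degenerate triple-root geometry at $a = 1$.
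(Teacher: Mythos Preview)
The statement you are attempting to prove is labeled a \emph{Conjecture} in the paper, and the paper does \emph{not} supply a proof. The only evidence offered there is the observation that the characteristic polynomial \eqref{eq:3d-char4} of the fourth-order recurrence for the diagonal has the factor $a^3+b$, which singles out the boundary $b=-a^3$; nothing further is claimed. So there is no ``paper's own proof'' to compare against.

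Your proposal is itself not a proof but an outline with openly acknowledged gaps. The factorization
\[
1-(x+y+z)+a(xy+yz+zx)-a^3xyz \;=\; A(x,y)-z\,A(ax,ay),\qquad A(x,y)=1-x-y+axy,
\]
is correct and genuinely attractive (it does not appear in the paper), and your reduction of sufficiency to the two-variable claim
\[
[x^m y^m]\,\frac{A(ax,ay)^m}{A(x,y)^{m+r}}\;\geq\;0
\]
is valid. But you stop precisely where the real difficulty begins: for $a>1$ the function $1/A(x,y)$ already has sign-changing Taylor coefficients (indeed, Theorem~\ref{th:2d} shows $1/A$ is positive iff $a\le1$), so one cannot hope for termwise positivity, and you offer no mechanism for the required cancellation beyond ``this seems to demand a genuinely new identity.'' That is a gap, not a proof step.

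On the necessity side, your plan to read off the sign of the leading Pemantle--Wilson constant $C(a,b)$ is reasonable in spirit, but you have not computed it, and the assertion that it is negative for all $\delta>0$ is exactly the content that needs proving. Moreover, the smooth-point asymptotic you invoke gives $d_n\sim C\,t_0^{-3n}n^{-1}$, which would only show \emph{eventual} negativity; the conjecture asserts that \emph{some} $d_n$ is negative, so eventual negativity suffices, but you should be explicit that this is what you are aiming for. Your $a=1$ formula $d_n=\sum_k\binom{n}{k}^3(-(b+1))^k$ is correct and is a good starting point for that boundary case.

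In summary: the paper leaves this open, and your write-up is a promising sketch rather than a proof. The factorization you found is a real contribution toward the problem, but both directions remain unresolved.
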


That the case $b = - a^3$ plays a special role can be seen from the
characteristic polynomial of the recurrence of minimal order for the diagonal
coefficients. Namely, the diagonal coefficients $u_n$ of $h_{a, b}$, as
defined in \eqref{eq:hab}, satisfy a fourth-order recurrence (the coefficients
have degree $4$ in $n$, degree $9$ in $a$ and degree $5$ in $b$), whose
characteristic polynomial is
\begin{equation}
  (a^3 + b) (a^3 + a b - (1 - a) x) ((x + b)^3 + 27 x (a^3 + a b - (1 - a) x))
  . \label{eq:3d-char4}
\end{equation}
Obviously, the first factor vanishes when $b = - a^3$. We further observe that
the cubic factor in \eqref{eq:3d-char4} has discriminant
\[ - 3^9 (a^3 - 3 a^2 - b)^2 (4 a^3 - 3 a^2 + 6 a b + b^2 - 4 b) . \]
The second factor vanishes if and only if $b = 2 - 3 a \pm 2 (1 - a)^{3 / 2}$,
which includes the principal condition of {\cite[Conjecture 1]{straub-pos08}}
as stated at the beginning of this section; see also Example \ref{eg:ns-3d}.


In the remainder of this section, we consider two cases of particular interest
in the three-variable case, namely the Askey--Gasper rational function $A (x,
y, z)$ from \eqref{eq:AG3} as well as Szeg\H{o}'s rational function
\begin{equation}\label{eq:S}
  S (x, y, z) = \frac{1}{1 - (x + y + z) + \frac{3}{4}  (xy + yz + zx)} .
\end{equation}
In both cases, we exhibit the arithmetic nature of the diagonals and
demonstrate that positivity follows as a consequence.

\begin{example}\label{ex:AG-rat}
  According to {\cite{ez-diag}}, the diagonal sequence $a_n \assign a_{n, n, n}$ for
  $A (x, y, z)$ as in \eqref{eq:AG3} satisfies the three-term recurrence equation
  \[ (n + 1)^2 a_{n + 1} = (7 n^2 + 7 n + 2) a_n + 8 n^2 a_{n - 1} , \]
  the latter already implying the positivity of the diagonal.
  The same recursion holds for the sequence $a_n = \sum_{k = 0}^n
  \binom{n}{k}^3$, also known as Franel numbers, which is a classical result
  of Franel {\cite{franel94}}. Furthermore, both sequences have the same initial conditions $a_0 = 1$,
  $a_1 = 2$. As indicated in \eqref{eq:AG3d}, the diagonal
  of $A (x, y, z)$ is thus given by the Franel numbers. The sequence $a_n$ is
  an Apery-like sequence, namely {\cite[sequence
  (4.8){\hspace{0.25em}}(a)]{asz-clausen}}, and the (Calabi--Yau) differential
  equation satisfied by the generating function has modular parametrization.
  As a consequence, the generating function has a hypergeometric form, namely
  \begin{equation}
    \sum_{n = 0}^{\infty} a_n z^n = \frac{1}{1 - 2 z}
    \,\pFq21{\frac13,\frac23}{1}{\frac{27z^2}{(1-2z)^3}},
  \label{fran}
  \end{equation}
  which we record for comparison with the next example. Here and in what follows
  \begin{equation*}
    \,\pFq21{a,b}{c}{z} = \sum_{n = 0}^{\infty} z^n \prod_{j=0}^{n-1}\frac{(a+j)(b+j)}{(1+j)(c+j)}
  \end{equation*}
  is the hypergeometric function.
  Note that positivity of $a_n$ is still apparent from~\eqref{fran}.
\end{example}

\begin{example}\label{ex:Sze-rat}
  As shown in {\cite{straub-pos08}}, the positivity of $S (x, y, z)$ can be deduced
  from the positivity of $A (x, y, z)$. On the other hand, the diagonals of the two
  are not related to each other in an easy way;
  the diagonal terms $s_n = [(xyz)^n] S (2 x, 2 y, 2 z)$ are given by
  \[ 1, 12, 198, 3720, 75690, 1626912, \ldots \]
  and satisfy the recurrence
  \begin{equation}
    2 (n + 1)^2 s_{n + 1} = 3 \left( 27 n^2 + 27 n + 8 \right) s_n - 81 (3 n -
    1)  (3 n + 1) s_{n - 1} . \label{eq:Sd-rec}
  \end{equation}
  Denoting by $y_0 (z) = \sum_{n \geq 0} s_n z^n$ the generating
  function of this sequence, it is routine to verify that
  \begin{equation}
    y_0 (z) = \pFq21{\frac13,\frac23}{1}{27z(2-27z)};
  \label{eq:Sd-gf}
  \end{equation}
  indeed, both sides in \eqref{eq:Sd-gf} satisfy the same differential equation and initial values.
  See Remark \ref{rk:Sd-gf} below on how one can find this expression. We note that
  \eqref{eq:Sd-gf} implies the binomial formula
  \begin{align}
    s_n = \sum_{k = 0}^n (- 27)^{n - k} 2^{2 k - n}  \frac{(3 k) !}{k!^3}
    \binom{k}{n - k},
  \end{align}
  though positivity is not apparent here.
\end{example}

\begin{lemma}\label{lem:Sze-rat}
The sequence $s_n$ in Example~\textup{\ref{ex:Sze-rat}} is positive.
\end{lemma}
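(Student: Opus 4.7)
The plan is to establish $s_n > 0$ via singularity analysis of the generating function $y_0(z) = \sum s_n z^n$, using the hypergeometric representation \eqref{eq:Sd-gf}. The key observation is the factorisation
\[
1 - 27z(2-27z) = (1-27z)^2,
\]
which shows that the unique finite singularity $u = 1$ of $F(u) := {}_2F_1(\tfrac13, \tfrac23; 1; u)$ is pulled back, under $u = 27z(2-27z)$, to the single point $z = 1/27$ (with multiplicity two). Consequently $y_0$ is analytic on $\{|z| \leq 1/27\} \setminus \{1/27\}$ and extends analytically to a $\Delta$-domain at $z = 1/27$.

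First I would combine the standard logarithmic expansion $F(u) = -\frac{\sqrt{3}}{2\pi}\log(1-u) + H(u)$, valid near $u = 1$ with $H$ analytic there (this follows from $\Gamma(\tfrac13)\Gamma(\tfrac23) = 2\pi/\sqrt{3}$ and the classical Kummer formula for the log case $c = a+b$), with the factorisation above to obtain
\[
y_0(z) = -\frac{\sqrt{3}}{\pi}\log(1-27z) + h(z),
\]
where $h$ is analytic in the $\Delta$-domain. The transfer theorem of Flajolet and Odlyzko then yields
\[
s_n = \frac{\sqrt{3}}{\pi}\cdot\frac{27^n}{n}\bigl(1 + O(1/n)\bigr),
\]
so that $s_n > 0$ for all $n \geq N_0$ with some explicit threshold $N_0$. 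The remaining indices $n < N_0$ are dispatched by running \eqref{eq:Sd-rec} forward from $s_0 = 1$, $s_1 = 12$: a finite and mechanical calculation which, in particular, reproduces the first six values listed in Example~\ref{ex:Sze-rat}.

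The principal technical obstacle is making the threshold $N_0$ explicit. This requires quantitative control of the remainder $h$, e.g.\ by bounding $H$ from its hypergeometric defining series on a small disc about $u = 1$ and transporting those bounds through $u = 27z(2-27z)$ to a Hankel contour at $z = 1/27$. A computationally lighter alternative is to replace the transfer theorem by a strong induction directly on \eqref{eq:Sd-rec}: the characteristic roots $27$ and $27/2$ of that recurrence suggest tracking the ratios $r_n := s_n/s_{n-1}$, which empirically approach $27$ from below, and maintaining an inductive hypothesis of the shape $r_n \geq 27 - C/n$ that propagates through \eqref{eq:Sd-rec} and, after a finite base case check, yields $s_n > 0$ for every $n$.
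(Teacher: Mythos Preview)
Your approach via singularity analysis is sound in outline and genuinely different from the paper's argument, but as you yourself flag, it is a plan rather than a proof: the asymptotic $s_n \sim \frac{\sqrt{3}}{\pi}\cdot\frac{27^n}{n}$ only secures positivity beyond some $N_0$, and you have not produced that $N_0$. Your suggested alternative of tracking $r_n = s_n/s_{n-1}$ and propagating an inequality $r_n \geq 27 - C/n$ through the recurrence is in the spirit of the cylindrical algebraic decomposition techniques in \cite{kauers-pos07}, and would likely close the gap, but the inductive step is not carried out either. So at present there is a genuine gap: neither branch of the argument is completed.

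By contrast, the paper's proof avoids asymptotics and finite checking entirely. It applies Ramanujan's cubic transformation
\[
\pFq21{\frac13,\frac23}{1}{1 - \Bigl(\frac{1-x}{1+2x}\Bigr)^3} = (1+2x)\,\pFq21{\frac13,\frac23}{1}{x^3}
\]
to rewrite $y_0(z) = (1+2x(z))\,{}_2F_1(\tfrac13,\tfrac23;1;x(z)^3)$, where $x(z)$ is determined from $27z(2-27z) = 1 - \bigl(\frac{1-x}{1+2x}\bigr)^3$ and shown, via the binomial expansion of $(1-27z)^{2/3}$, to have all Taylor coefficients positive. Positivity of \emph{every} $s_n$ then follows by composition, with no threshold to compute. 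Your route has the advantage of being more generic (it would apply whenever the dominant singularity behaves well), while the paper's route is specific to this hypergeometric form but yields a complete, effective, and short proof.
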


\begin{proof}
  To deduce positivity of $y_0 (z)$ in \eqref{eq:Sd-gf}, start with
  Ramanujan's cubic transformation \cite[p.~97]{berndtV}
  \begin{equation*}
    \pFq21{\frac13,\frac23}{1}{1 - \left( \frac{1 - x}{1 + 2 x} \right)^3}
    = (1 + 2 x) \pFq21{\frac13,\frac23}{1}{x^3},
  \end{equation*}
  which is proven in {\cite{bb-cubicagm}}; see also \cite[Corollary 6.2]{maier-hyp07}.
  With $x$ and $z$ related by
  \[ 27 z (2 - 27 z) = 1 - \left( \frac{1 - x}{1 + 2 x} \right)^3, \]
  we find that
  \[ 2 x (z) = \frac{3}{1 + 2 (1 - 27 z)^{2 / 3}} - 1. \]
  The binomial theorem shows that $(1 - 27 z)^{2 / 3} = 1 - z g (z)$ for some
  $g (z)$ with positive Taylor coefficients. It follows that $x (z) = c_1 z +
  c_2 z^2 + \ldots$ for positive $c_j$, so that
  \[ y_0 (z) = (1 + 2 x (z)) \pFq21{\frac13,\frac23}{1}{x(z)^3} \]
  is seen to have positive coefficients.
\end{proof}

\begin{remark}
  \label{rk:Sd-gf}Let us briefly indicate how we found \eqref{eq:Sd-gf}.
  First, note that $y_0 (z)$ is the analytical solution of the differential equation
  corresponding to \eqref{eq:Sd-rec} characterized by $y_0 (0) = 1$.
  Let $y_1 (z)$ be the solution such that $y_1 (z) - y_0
  (z) \log (z) \in z\mathbbm{Q} [[z]]$. Then
  \[ q (z) \assign \exp \left( \frac{y_1 (z)}{y_0 (z)} \right) = z + \frac{33
     z^2}{2} + 306 z^3 + \frac{12203 z^4}{2} + 128109 z^5 + O \left( z^6
     \right) . \]
  Denoting by $z (q)$ the inverse function, we observed, by computing the
  first few terms of the $q$-expansion, that
  \[ y_0  (z (q / 2)) = \sum_{n, m \in \mathbbm{Z}} q^{n^2 + nm + m^2} . \]
  The right-hand side is the theta series of the planar hexagonal lattice,
  also known as the first cubic theta function $a (q)$, and its relation to
  the hypergeometric function in \eqref{eq:Sd-gf} is well known; see, for
  instance, {\cite{bb-cubicagm}}. For further background on this approach we
  refer the interested reader to {\cite{asz-clausen}}.
\end{remark}

\section{Examples in the four-dimensional case}\label{sec:4d}

We now study positivity of the rational functions in four variables, which are
of type \eqref{eq:rat-linear}. That is, we consider the rational functions
\begin{equation}
  \label{eq:habc} h_{a, b, c} (\bx) = \frac{1}{1 - e_1(\bx) + ae_2(\bx) + be_3(\bx) + ce_4(\bx)},
\end{equation}
where $\bx = (x_1,x_2,x_3,x_4)$ and $e_k(\bx)$ are the elementary symmetric
functions defined in \eqref{eq:e}.
Table~\ref{tbl:4d} summarizes the examples we discuss in this section.

\begin{table}[h]
  \begin{center}
  \begin{tabular}{|c|c|c|l|}
    \hline
    $a$ & $\phantom+b$ & $\phantom+c$ & \\
    \hline
    $0$ & $\phantom+2$ & $\phantom+4$ & positivity conjectured in {\cite{kz-pos08}}\\
    \hline
    $\frac{2}{3} \vphantom{\big|^0}$ & $\phantom+0$ & $\phantom+0$ & positivity conjectured in {\cite{askey-pos72}}; implied by $h_{0, 2, 4}$;\\[-.5mm]
    & & & \quad non-negativity proven in {\cite{ss-pos13}}\\
    \hline
    $0$ & $\phantom+\frac{64}{27} \vphantom{\big|^0_0}$ & $\phantom+0$ & positivity conjectured in {\cite{kauers-pos07}}\\
    \hline
    $0$ & $\phantom+0$ & $\phantom+24$ & non-negativity conjectured in {\cite{zb-pos-el83}};\\[-.5mm]
    & & & \quad partly proven in {\cite{kauers-pos07}}\\
    \hline
    $0$ & $\phantom+4$ & $-16$ & positivity proven in {\cite{koornwinder-pos78}}; see also
    {\cite{zb-pos-el83}}\\
    \hline
    $\frac{8}{9} \vphantom{\big|^0_0}$ & $-\frac{16}{27}$ & $\phantom+0$ & positivity proven in
    {\cite[{\S}3]{szego-pos33}}; implied by $h_{0, 4, - 16}$\\
    \hline
  \end{tabular}
  \caption{\label{tbl:4d}Interesting instances of $h_{a, b, c}$ as in
  \eqref{eq:habc}}
  \end{center}
\end{table}

\begin{example}[Lewy--Askey rational function]\label{ex:AG-rat4d}
  In {\cite{askey-pos72}}, Askey and Gasper mention the following
  four-dimensional generalization of Szeg\H{o}'s function
  \[ h_{2 / 3, 0, 0}(x,y,z,w) = \frac{1}{1 - (x + y + z + w) + \frac{2}{3} (xy + xz + xw
     + yz + yw + zw)}, \]
  which is just a rescaled version of $1 / e_2(1 - x, 1 - y, 1 - z, 1 - w)$.
  As already mentioned in the introduction, the non-negativity of this rational function
  was recently established in {\cite{ss-pos13}}. The scaled initial
  diagonal terms $s_n := 9^n [(x y z w)^n] h_{2 / 3, 0, 0} (x, y, z, w)$ are
  \[ 1, 24, 1080, 58560, 3490200, 220739904, \ldots, \]
  and one checks that $s_n = \binom{2n}n u_n$, where the sequence $u_n$ satisfies the recurrence equation
  \[ 3 (n + 1)^2 u_{n + 1} = 4 \left( 28 n^2 + 28 n + 9 \right) u_n - 64 (4 n -
     1) (4 n + 1) u_{n - 1} . \]
  As in Example \ref{ex:Sze-rat}, the differential equation, of which the
  generating function $y_0 (z) = \sum_{n \geqslant 0} u_n z^n$ is the unique
  analytical solution with value $1$ at $z = 0$, admits modular parametrization.
  This fact was found and communicated to us by van Straten. As a consequence,
  we have the hypergeometric representation
  \begin{equation}
    y_0 (z) = \frac{1}{(1 - 48 z + 12288 z^3)^{1 / 4}}
    \,\pFq21{\frac1{12},\frac5{12}}{1}{\frac{-1728 z^2 (3 - 64 z) (1 - 16 z)^6}{(1 - 48 z + 12288 z^3)^3}},
  \label{duco}
  \end{equation}
  which, once found, can be verified by comparing the differential equations
  satisfied by both sides. In fact, using hypergeometric transformations, we
  find that \eqref{duco} simplifies to
  \begin{equation}\label{ducox}
    y_0(z) = \frac{1}{\sqrt{1-24z}} \,\pFq21{\frac14,\frac34}{1}{\frac{-64z^2(3-64z)}{(1-24z)^2}} .
  \end{equation}
  As in Example \ref{ex:Sze-rat}, we can now use the arithmetic properties of
  this function to show that the sequence $u_n$, hence the diagonal terms
  $s_n$, are indeed positive.
  To do so, we may proceed as in Lemma \ref{lem:Sze-rat}, except now using
  Ramanujan's quadratic transformation {\cite[p. 146]{berndtV}} (also proven in \cite[Corollary 6.2]{maier-hyp07}).
  Alternatively, we can prove positivity of the diagonal terms from the above
  three-term recurrence using cylindrical algebraic decomposition in the style
  of \cite{kauers-pos07}.
\end{example}


\begin{example}[Kauers--Zeilberger rational function]
\label{ex:KZ-rat4d}
  On the other hand, the positivity of the rational function in the previous example is
  implied, as shown in {\cite{kz-pos08}} using positivity preserving operators,
  by the (conjectured) positivity of the rational function
  \begin{equation*}
    h_{0, 2, 4}(x,y,z,w) = \frac{1}{1 - (x + y + z + w) + 2 e_3(x,y,z,w) + 4xyzw},
  \end{equation*}
  which we also refer to as $D (x, y, z, w)$. This rational function, as
  mentioned in the introduction, has particularly appealing diagonal
  coefficients. Namely, expanding
  \begin{equation*}
    D(x,y,z,w) = \sum_{n=0}^\infty \left[ (x + y + z + w) - 2 (yzw + xzw + xyw + xyz) - 4
     xyz w \right]^n,
  \end{equation*}
  and applying the binomial theorem, one obtains a five-fold sum for the
  diagonal coefficients $d_n = d_{n,n,n,n}$.
  With the help of the multivariate Zeilberger algorithm {\cite{apa-zeil}} we verify that
  the sequence $d_n$ satisfies, for $n = 1, 2, \ldots$,
  \[ (n + 1)^3 d_{n + 1} - 4 (2 n + 1)  (3 n^2 + 3 n + 1) d_n + 16 n^3 d_{n -
     1} = 0. \]
  The same recurrence is satisfied by {\cite[sequence (4.12){\hspace{0.25em}}($\epsilon$)]{asz-clausen}},
  so that comparing initial values proves that
  \[ d_{n} = \sum_{k = 0}^n \binom{n}{k}^2 \binom{2 k}{n}^2 . \]
  Since this makes positivity of the diagonal terms obvious, an affirmative
  answer to Question~\ref{q:diag} when $d=4$ would prove the conjectured
  positivity of $D(x,y,z,w)$.
\end{example}

\begin{example}[Examples of Szeg\H{o} and Koornwinder]\label{ex:SK-rat4d}
  Szeg\H{o} proved, as a higher-order generalization of the function \eqref{eq:S}, the positivity
  of $1 / e_3(1 - x, 1 - y, 1 - z, 1 - w)$;
  see {\cite[{\S}\,3]{szego-pos33}}. Upon rescaling, this is $h_{8 / 9, - 16 / 27, 0}$.
  The positivity of the rational function $h_{8 / 9, - 16 / 27, 0}$ can also be
  obtained, again via positivity preserving operators, see
  {\cite{straub-pos08}}, from the positivity of Koornwinder's rational function~{\cite{koornwinder-pos78}}
  \begin{equation*}
    h_{0, 4, -16}(x,y,z,w) = \frac{1}{1 - (x + y + z + w) + 4 e_3(x,y,z,w) -16xyzw},
  \end{equation*}
which was proven in~{\cite{zb-pos-el83,koornwinder-pos78}}.
  Using the multivariate Zeilberger algorithm {\cite{apa-zeil}}, as in Example~\ref{ex:KZ-rat4d},
  we can show that the diagonal of $h_{0, 4, - 16}$ is given by the, obviously
  positive, sequence
  \[ \sum_{k = 0}^n \binom{2 k}{k}^2 \binom{2 (n - k)}{n - k}^2 . \]
  This, again, is an Ap\'ery-like sequence, namely {\cite[sequence
  (4.10){\hspace{0.25em}}($\beta$)]{asz-clausen}}.
\end{example}

\begin{example}\label{ex:GRZ-rat4d}
  As shown in {\cite{kauers-pos07}}, modulo the assertion
  from {\cite{zb-pos-el83}} with an unpublished proof, the rational function
  \[ h_{0, 0, c}(x,y,z,w) = \frac{1}{1 - (x + y + z + w) + c x y z w} \]
  has non-negative coefficients if and only if $c \leq 24$. The condition
  $c \leq 24$ is necessary because the (diagonal) coefficient of
  $x y z w$ in $h_{0, 0, c}$ is $24 - c$. On the other hand,
  asymptotic considerations, such as in Example~\ref{eg:ns-4d} below, suggest that all
  but finitely many diagonal coefficients of $h_{0, 0, c}$ are positive if $c < 27$.

  This is the case $d = 4$ of a general conjecture from {\cite{zb-pos-el83}} mentioned in
  the introduction. In the general case, the rational function
  \[ \frac{1}{1 - (x_1 + x_2 + \ldots + x_d) + c x_1 x_2 \cdots x_d} \]
  can only be non-negative if $c \leq d!$ since the coefficient of $x_1 x_2
  \cdots x_d$ is seen to be $d! - c$. It is conjectured in {\cite{zb-pos-el83}}
  that, for $d\geq4$, the condition $c \leq d!$ is indeed sufficient, and it is
  claimed that the non-negativity follows from the non-negativity of the diagonal.
  On the other hand, some hypergeometric intuition suggests that the diagonal
  coefficients are eventually positive if $c < (d - 1)^{d - 1}$.
\end{example}

\begin{example}\label{ex:Kau-rat4d}
  In \cite{kauers-pos07}, the rational function
  \begin{equation*}
    h_{0,64/27,0}(x,y,z,w)
    = \sum_{k,l,m,n \ge 0} u_{k,l,m,n} x^k y^l z^m w^n
  \end{equation*}
  is conjectured to be positive. As evidence, it is shown in \cite{kauers-pos07},
  using cylindrical algebraic decomposition (CAD), that $u_{k,l,m,n} > 0$
  whenever the sum of the smallest two indices is at most $12$.
  On the other hand, we find that the diagonal coefficients satisfy a recurrence
  of order $3$ and degree $6$.  While we were unable to discover any closed form
  expression for the diagonal terms, we have used CAD to prove that they are
  positive. Once more, an affirmative answer to Question~\ref{q:diag} when
  $d=4$ would therefore imply the conjectured positivity of $h_{0,64/27,0}$.
\end{example}

\section{Multivariate asymptotics}
\label{sec:5d}

Multivariate asymptotics, as developed in {\cite{bp-masy3,pw-masy1,pw-masy2,rw-masy}}
and further illustrated
in {\cite{pw-masy-eg}}, is an approach to determine the asymptotics of the
coefficients $u_{n_1, \ldots, n_d}$ of a multivariate generating function
\[ h (x_1, \ldots, x_d) = \sum_{n_1, \ldots, n_d \geq 0} u_{n_1, \ldots,
   n_d} x_1^{n_1} \cdots x_d^{n_d} \]
directly from $h$ and its singular points.

In the sequel, we write $\bx = (x_1, \ldots, x_d)$. In the cases we are
presently interested in, $h = 1 / p$ is the reciprocal of a polynomial $p (\bx)$.
Denote with $\mathcal{V} \subseteq \mathbbm{C}^d$ the {\tmem{singular
variety}} defined by $p = 0$. A point $\bx \in \mathcal{V}$ is smooth if $\nabla
p (\bx) = (\partial_1 p (\bx), \ldots, \partial_d p (\bx)) \neq \bold0$, where
$\partial_j \assign \partial/\partial x_j$ for $j=1,\dots,d$.
The nonsmooth points can be comfortably computed using Gr\"obner bases, as detailed in
{\cite[Section 4]{pw-masy-eg}}.

The next three examples suggest that rational functions which are on the
boundary of positivity (that is, slightly perturbing its coefficients can change
whether the function is positive) are intimately linked with rational
functions that have nonsmooth points on their singular variety. This echoes
the remark in {\cite{pw-masy-eg}} that, while for generic functions all points
of the singular variety are smooth, ``interesting applications tend not to be
generic.''

\begin{example}
  With $d = 2$, consider the case
  \[ h (x_1, x_2) = \frac{1}{1 - (x_1 + x_2) + a x_1 x_2}, \]
  which by Theorem~\ref{th:2d} is positive if and only if $a\leq1$.
  Then the singular variety has nonsmooth points if and only if $a = 1$. The
  nonsmooth point in the case $a = 1$ is $\bx = (1, 1)$.

  If $a < 1$ we may apply the machinery of {\cite{pw-masy1}} to find that
  \[ u_{n, n} \sim \frac{(1 + \sqrt{1 - a})^{2 n + 1}}{2 \sqrt{\pi n \sqrt{1 -
     a}}} . \]
\end{example}

\begin{example}
  \label{eg:ns-3d}With $d = 3$, consider the case
  \[ h (x_1, x_2, x_3) = \frac{1}{1 - (x_1 + x_2 + x_3) + a (x_1 x_2 + x_2 x_3
     + x_3 x_1) + b x_1 x_2 x_3} . \]
  Then the singular variety has nonsmooth points if and only if
  \[ 4 a^3 - 3 a^2 + 6 a b + b^2 - 4 b = 0. \]
  Solving this condition for $b$, gives
  $b = 2 - 3 a \pm 2 (1 - a)^{3 / 2}$,
  which includes precisely the boundary in {\cite[Conjecture 1]{straub-pos08}}
  explicitly describing the transition between positive rational functions and
  those with negative coefficients.
\end{example}

\begin{example}
  \label{eg:ns-4d}With $d = 4$, consider the case
  \[ h (x_1, x_2, x_3, x_4) = \frac{1}{1 - e_1 (\bx) + a e_2 (\bx) + b e_3 (\bx) + c
     e_4 (\bx)} . \]
  Then the singular variety has nonsmooth points if and only if
  \begin{align}
    0 & = (a^3 + 2 a b - a c + b^2 + c) (64 b^3 - 27 (b^4 + c^2) + 6 b c (2
    c - b) + c^3 \nonumber\\
    &\qquad - 54 a (2 b - c) (b^2 + c) + 18 a^2 (2 b^2 + 10 b c - c^2) - 54 a^3
    (b^2 + c) + 81 a^4 c) .
  \label{eq:factorization}
  \end{align}
  We note that all the examples in Table~\ref{tbl:4d}, with the exception of
  $(0, 0, 24)$ which is not `natural' as pointed out in Example~\ref{ex:GRZ-rat4d}, have nonsmooth points on the singular variety. The above
  factorization of the right-hand side of \eqref{eq:factorization} implies that, when
  \begin{equation}
    c = \frac{a^3 + 2 a b + b^2}{a - 1}, \label{eq:ns-4d-c}
  \end{equation}
  the rational function $h$ has nonsmooth points on its singular variety.
  The examples $(a,b,c)=(0, 4, - 16)$ and $(8 / 9, - 16 / 27, 0)$ from
  Table~\ref{tbl:4d} are of this form.
\end{example}

\begin{example}
  In the case $a = 0$ of (\ref{eq:ns-4d-c}), the rational function is
  \[ h_{0, b, - b^2} = \frac{1}{1 - (x + y + z + w) + b (yzw + xzw + xyw +
     xyz) - b^2 x y z w} . \]
  By directly computing the coefficients from their recurrence, we observe that its Taylor coefficients
  $u_{k,l,m,n}$ are positive for all $0 \leq k,l,m,n \leq 20$ if and only
  if $b < 4.00796 \ldots$, which suggests that $h_{0, b, - b^2}$ is positive
  if and only if $b \leq 4$. Note that positivity was proven in
  {\cite{koornwinder-pos78}} for the case $b = 4$. On the other hand, upon
  setting, say, $w = 0$, it follows from {\cite[Proposition 5]{straub-pos08}}
  that positivity of $h_{0, b, - b^2}$ requires $b \leq 4$. However,
  it appears empirically that the diagonal coefficients are positive for any $b \in
  \mathbbm{R}$. We have verified this for the first $50$ coefficients.
  Note that this fits nicely and further illustrates
  Question \ref{q:diag}. We note, however, that, for $b < 4$, the function
  $h_{0, b, - b^2}$ does not appear to be on the boundary of positivity.
\end{example}

Let a direction $\bn = (n_1, \ldots, n_d) \in \mathbbm{Z}_{> 0}^d$ be given.
Among points on the singular variety $\mathcal{V}$, a special role is played
by the {\tmem{critical points}} $\bx$ for $\bn$, which are characterized
{\cite[Proposition 3.11]{pw-masy-eg}} by the $d$ equations $p (\bx) = 0$ and,
for all $j = 1, \ldots, d - 1$,
\begin{equation}
  n_d x_j \partial_j p (\bx) = n_j x_d \partial_d p (\bx) . \label{eq:cp-def}
\end{equation}
We note the following consequence of {\cite[Theorem 3.16]{pw-masy-eg}}, which
is a simple reformulation of {\cite[Proposition 5.1]{rw-masy}}; see the remark
after {\cite[Theorem 3.16]{pw-masy-eg}} for the uniqueness.

\begin{proposition}
  \label{prop:cp-nn}Let $\bn \in \mathbbm{Z}_{> 0}^d$ be such that there is a
  smooth critical point for~$\bn$. If the rational function $h$ is non-negative,
  then there is a unique critical point for~$\bn$ in $\mathbbm{R}_{> 0}^d$.
\end{proposition}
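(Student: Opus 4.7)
The plan is to read Proposition~\ref{prop:cp-nn} as a direct consequence of the two cited results, namely \cite[Theorem 3.16]{pw-masy-eg} and \cite[Proposition 5.1]{rw-masy}. The argument splits into two parts: existence of a critical point for $\bn$ in $\mathbb{R}_{>0}^d$, and its uniqueness.

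For existence, I would combine the smooth-critical-point hypothesis with the asymptotic machinery of \cite{pw-masy-eg}. By \cite[Theorem 3.16]{pw-masy-eg}, for $k \to \infty$ the coefficients $u_{k\bn}$ admit an asymptotic expansion as a finite sum of terms of the shape $(\bx^*)^{-k\bn} k^{-(d-1)/2} (C(\bx^*)+o(1))$ ranging over the contributing critical points $\bx^*$ of the direction $\bn$; the existence of a smooth critical point is exactly what puts us in the regime to which that theorem applies. If none of the contributing points lay in $\mathbb{R}_{>0}^d$, then in each factor $(\bx^*)^{-k\bn}$ the phase $e^{-ik\sum_j n_j\arg x_j^*}$ would be nontrivial, and the argument behind \cite[Proposition 5.1]{rw-masy} shows that such oscillatory phases cannot cancel so as to keep $u_{k\bn}\geq 0$ for all large $k$. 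Hence at least one contributing smooth critical point must lie in the open positive orthant.

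For uniqueness, I would use the observation recorded in the remark following \cite[Theorem 3.16]{pw-masy-eg}. Setting $x_j = e^{u_j}$, the equations \eqref{eq:cp-def} together with $p(\bx)=0$ express that $\bu$ is a stationary point of the linear functional $\bu \mapsto \sum_j n_j u_j$ on the real hypersurface $\{p(e^{u_1},\ldots,e^{u_d})=0\}$. Because $1/p$ has non-negative Taylor coefficients, the logarithmic image of the domain of convergence of $h$ is convex, and the supporting hyperplane normal to a vector $\bn$ with strictly positive entries meets its boundary in a unique point in the positive orthant. Strict convexity, which is guaranteed by the smoothness of the critical point furnished above, then gives uniqueness.

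The main obstacle I anticipate is matching the Pemantle--Wilson and Raichev--Wilson formalisms to our setting rather than producing any new idea: one has to check that for each $\bn\in\mathbb{Z}_{>0}^d$ the dominant contributing critical point truly sits in the open positive orthant and not merely in its closure, and that the logarithmic convexity of the domain of convergence is strict in the direction $\bn$. Both are standard in the analytic combinatorics literature but a little tedious to track through, which is presumably why the authors describe Proposition~\ref{prop:cp-nn} as a \emph{simple reformulation} of \cite[Proposition 5.1]{rw-masy} rather than offering an independent proof.
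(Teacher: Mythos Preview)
Your proposal is essentially the same as what the paper does: the paper offers no independent proof of Proposition~\ref{prop:cp-nn} at all, instead recording it as ``a simple reformulation of \cite[Proposition 5.1]{rw-masy}'' obtained from \cite[Theorem 3.16]{pw-masy-eg}, with uniqueness deferred to the remark following that theorem. You have correctly identified this and sketched the underlying existence/uniqueness arguments in a way that is faithful to those references.
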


\begin{example}
\label{ex:4b}
  For illustration, we use Proposition~\ref{prop:cp-nn} to give an alternative
  proof of {\cite[Proposition 5]{straub-pos08}}, which states that the
  rational function
  \[ h (x, y, z) = \frac{1}{1 - (x + y + z) + b x y z} \]
  is not non-negative if $b > 4$. Suppose that $b > 4$, in which case
  $\mathcal{V}$ is smooth. A simple computation shows that $h$ has three
  critical points for $\bn = (1, 1, 1)$, namely, the points $(c, c, c)$, where $c$
  is a solution of
  \begin{equation}
    1 - 3 c + b c^3 = 0. \label{eq:f3b-c}
  \end{equation}
  This cubic equation has discriminant $\Delta = 27 b (4 - b)$. The assumption
  $b > 4$ implies that $\Delta < 0$, which in turn implies that the equation
  \eqref{eq:f3b-c} has only one real root. Since this real root is necessarily
  negative by the intermediate value theorem, we conclude that none of the
  three critical points for $(1, 1, 1)$ lies in $\mathbbm{R}_{> 0}^3$. By
  Proposition~\ref{prop:cp-nn} it follows that $h$ is not non-negative if $b > 4$.
\end{example}

Generalizing the approach taken in this example, we are able to show one part
of {\cite[Conjecture 1]{straub-pos08}}.

\begin{theorem}\label{th:part1}
  For the rational function
  \[ h (x, y, z) = \frac{1}{1 - (x + y + z) + a (x y + y z + z x) + b x y z}
  \]
  to be non-negative it is necessary that $a \leq 1$ and $b \leq 2 -
  3 a + 2 (1 - a)^{3 / 2}$.
\end{theorem}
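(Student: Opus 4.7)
The plan is to prove the two necessary conditions separately. For $a \leq 1$, I would specialize to $z = 0$: the Taylor coefficients $u_{i, j, 0}$ of $h(x, y, 0) = 1/(1 - (x + y) + axy)$ form a subsequence of those of $h$, so non-negativity of $h$ implies non-negativity of $h(x, y, 0)$. By the analysis of Section~\ref{sec:2d} (notably Remark~\ref{rk:char-2d} and the proof of Theorem~\ref{th:2d}), the diagonal of $h(x, y, 0)$ is sign-indefinite whenever $a > 1$, which forces $a \leq 1$.

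For $b \leq B_+ := 2 - 3 a + 2(1 - a)^{3/2}$, I would extract the coefficient of $z$ in $h$:
\[
[z^1] h(x, y, z) = \frac{1 - a(x + y) - b x y}{(1 - (x + y) + a x y)^2}.
\]
Non-negativity of $h$ forces non-negativity of this two-variable series, hence also of its specialization at $x = y = t$, namely
\[
\Phi(t) = \frac{1 - 2 a t - b t^2}{(1 - 2 t + a t^2)^2},
\]
whose $t^n$-coefficient equals $\sum_{i + j = n} u_{i, j, 1}$. For $a < 1$ I would factor $1 - 2t + at^2 = (1 - \alpha t)(1 - \beta t)$ with $\alpha = 1 + \sqrt{1 - a} > \beta = 1 - \sqrt{1 - a}$ and apply standard singularity analysis at the dominant pole $t = 1/\alpha$. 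Writing $\Phi(t) = F(t)/(1 - \alpha t)^2$ with $F(t) = (1 - 2 a t - b t^2)/(1 - \beta t)^2$, the identities $\alpha^2 - 2 a \alpha = B_+$ and $\alpha - \beta = 2 \sqrt{1 - a}$ yield $F(1/\alpha) = (B_+ - b)/[4(1 - a)]$. Transfer theorems then give $[t^n]\Phi \sim F(1/\alpha)(n + 1)\alpha^n$ as $n \to \infty$, which is negative for $n$ large whenever $b > B_+$, contradicting the non-negativity of $\Phi$.

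The boundary case $a = 1$ needs separate treatment because $\alpha = \beta$ and the factorization degenerates. There the denominator simplifies to $(1 - x)(1 - y)(1 - z) + (b + 1) x y z$, and expanding $h$ as a geometric series in $(b + 1) x y z/[(1 - x)(1 - y)(1 - z)]$ gives the closed form $u_{n, 1, 1} = 1 - (b + 1) n$, negative for $n$ large whenever $b > -1 = B_+(1)$. The principal technical obstacle is the singularity analysis itself: one must verify that the leading asymptotic genuinely captures the sign of $[t^n]\Phi$, checking that the subdominant pole at $t = 1/\beta$ does not interfere and that everything remains uniform as $a \to 1^-$ — a confluent limit that is conveniently sidestepped by the direct computation at $a = 1$.
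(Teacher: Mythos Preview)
Your argument is correct and takes a genuinely different route from the paper's. The paper proves the second inequality by applying the Pemantle--Wilson critical-point criterion (Proposition~\ref{prop:cp-nn}) in the diagonal direction $\bn=(1,1,1)$: it computes all critical points of the singular variety, shows that for $a\leq1$ and $b>2-3a+2(1-a)^{3/2}$ the cubic $1-3c+3ac^2+bc^3=0$ has negative discriminant (so no symmetric critical point lies in $\RR_{>0}^3$), and rules out the non-symmetric critical points as well. Your argument bypasses this machinery entirely: you look at the slice $[z^1]h$, collapse it to a univariate rational function $\Phi(t)$, and read off the sign of the coefficients from the double pole at $t=1/\alpha$ via partial fractions. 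Your key identity $\alpha^2-2a\alpha=B_+$ is exactly what makes the boundary value $b=2-3a+2(1-a)^{3/2}$ appear, and it does so in a very transparent way.

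What each approach buys: yours is elementary and self-contained (for a rational $\Phi$, ``transfer theorems'' reduce to partial fractions, so the ``principal technical obstacle'' you flag is in fact trivial here; the contribution from $1/\beta$ is $O(n|\beta|^n)=o(\alpha^n)$ since $|\beta|<\alpha$ for all $a<1$, and the confluent case $a=1$ you handle directly). The paper's approach, by contrast, situates the result within the general framework of Section~\ref{sec:5d}, linking the positivity boundary to the locus where $\mathcal V$ acquires nonsmooth points (Example~\ref{eg:ns-3d}), and the same method is reused in Example~\ref{ex:4b} and extends more systematically to higher-dimensional analogues.
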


\begin{proof}
  Upon setting $z = 0$, it follows from Section \ref{sec:2d} that $a \leq
  1$ is a necessary condition for $h$ to be non-negative.

  The defining equations \eqref{eq:cp-def} for critical points for $\bn = (1, 1,
  1)$ are equivalent to
  \[ (x - z) (a y - 1) = 0, \hspace{1em} (y - z) (a x - 1) = 0. \]
  There are therefore two kinds of critical points. Firstly, the points $(c,
  c, c)$, where $c$ is a solution to
  \begin{equation}
    1 - 3 c + 3 a c^2 + b c^3 = 0; \label{eq:f3ab-c}
  \end{equation}
  secondly, the points $(x, y, z)$ where two coordinates are equal to $1 / a$
  and the third coordinate is $a (1 - a) / (a^2 + b)$.

  We observe that the discriminant of the cubic equation \eqref{eq:f3ab-c} is
  negative if $a \leq 1$ and $b > 2 - 3 a + 2 (1 - a)^{3 / 2}$. In that
  case, by the same argument as in the previous example, there are no critical
  points of the first kind in $\mathbbm{R}_{> 0}^3$. On the other hand, unless
  $b = - a^3$, there are three distinct critical points of the second kind,
  which either all lie in $\mathbbm{R}_{> 0}^3$ or all lie outside
  $\mathbbm{R}_{> 0}^3$.

  Suppose that $a \leq 1$ and $b > 2 - 3 a + 2 (1 - a)^{3 / 2}$. Then the
  case $b = - a^3$ occurs only if $a < - 3$, in which case the critical points
  of the second kind lie outside $\mathbbm{R}_{> 0}^3$. We conclude that $h$
  cannot have a unique critical point in $\mathbbm{R}_{> 0}^3$. Since, by
  Example~\ref{eg:ns-3d}, all points on the singular variety of $h$ are
  smooth, the claim therefore follows from Proposition~\ref{prop:cp-nn}.
\end{proof}

Note the special role played by the diagonal direction $\bn = (1,1,1)$, though
the present proof does use global information when applying
Proposition~\ref{prop:cp-nn}.

\medskip
Finally, we would like to point out a resemblance of Question~\ref{q:diag}
and the asymptotic analysis in this section with a famous theorem of Grace {\cite{grace,hormander}},
also known as the Grace--Walsh--Szeg\H{o} coincidence theorem. It states that,
given a simply connected open set $D\subset\mathbb C$,
if either $D$ is convex or the degree of the polynomial
$$
p (\bx) = p (x_1, \ldots, x_d) = \sum_{k = 0}^d c_k e_k (\bx)
$$
is $d$, then, for any $\xi_1,\ldots,\xi_n\in D$, there exists $\xi\in D$ such that
$$
p(\xi_1,\ldots,\xi_n) = p(\xi,\ldots,\xi).
$$

\begin{acknowledgements}
We are indebted to Duco van Straten for providing us with the explicit formula~\eqref{duco}
and also with geometric insights behind his derivation. We thank the referees for their healthy
criticism on an earlier version of this work.
\end{acknowledgements}


\bigskip
\noindent
Armin Straub:
{\sc Max-Planck-Institut f\"ur Mathematik, Vivatsgasse 7, Bonn D-53111, Germany} \\
{\em URL}: \texttt{http://arminstraub.com/} \\
{\em Current address}: {\sc Department of Mathematics, University of Illinois, 1409 West Green St, Urbana, IL 61801, USA}

\medskip
\noindent
Wadim Zudilin:
{\sc School of Mathematical and Physical Sciences,
The University of Newcastle, Callaghan NSW 2308, Australia} \\
{\em URL}: \texttt{http://wain.mi.ras.ru/}

\end{document}